\setlist[itemize]{topsep=0ex,itemsep=0ex,parsep=0ex}
\setlist[enumerate]{topsep=0ex,itemsep=0ex,parsep=0ex}
\crefname{lem}{Lemma}{Lemmas}
\crefname{thm}{Theorem}{Theorems}
\crefname{ques}{Question}{Theorems}
\crefname{cor}{Corollary}{Corollaries}
\crefname{enumi}{Item}{Items}
\newcommand{\defn}[1]{\textcolor{Maroon}{\emph{#1}}}
\def\NAT@spacechar{~}
\renewcommand{\baselinestretch}{1.075}
\DeclarePairedDelimiter{\abs}{\lvert}{\rvert}
\renewcommand{\epsilon}{\varepsilon}
\renewcommand{\emptyset}{\varnothing}
\renewcommand{\geq}{\geqslant}
\renewcommand{\leq}{\leqslant}
\DeclareMathOperator{\dist}{dist}
\DeclareMathOperator{\tw}{tw}
\DeclareMathOperator{\td}{td}
\newcommand{\FF}{\mathcal{F}}
\newcommand{\GG}{\mathcal{G}}
\newcommand{\NN}{\mathbb{N}}
\newcommand{\WW}{\mathcal{W}}
\newcommand{\odd}{\text{odd}}
\DeclareMathOperator{\ctd}{\overline{td}}
\theoremstyle{plain}
\newtheorem{thm}{Theorem}
\newtheorem{lem}[thm]{Lemma}
\crefname{obs}{Observation}{Observations}
\newtheorem*{lem*}{Lemma}
\theoremstyle{definition}
\newtheorem{conj}[thm]{Conjecture}
\newtheorem*{conj*}{Conjecture}
\begin{document}
\title{\bf\boldmath\fontsize{18pt}{18pt}\selectfont Clustered Colouring of Odd-$H$-Minor-Free Graphs}

\author{%
Robert~Hickingbotham\,\footnotemark[4] \qquad 
Dong Yeap Kang\,\footnotemark[5]  \qquad 
Sang-il Oum\,\footnotemark[3]  \\
Raphael~Steiner\,\footnotemark[2] \qquad
David~R.~Wood\,\footnotemark[4]}
 
\footnotetext[4]{School of Mathematics, Monash University, Melbourne, Australia (\texttt{\{robert.hickingbotham, david.wood\}@monash.edu}). Research of R.H.\ supported by an Australian Government Research Training Program Scholarship. Research of D.W.\ supported by the Australian Research Council.}

\footnotetext[5]{Extremal Combinatorics and Probability Group, Institute for Basic Science, Daejeon, South Korea (\texttt{dykang.math@ibs.re.kr}). Research of DY.K. supported by the Institute for Basic Science (IBS-R029-Y6).}

\footnotetext[3]{Discrete Mathematics Group, Institute for Basic Science, Daejeon, South Korea; and Department of Mathematical Sciences, KAIST, Daejeon, South Korea (\texttt{sangil@ibs.re.kr}). Research of S.O. supported by the Institute for Basic Science (IBS-R029-C1).}

\footnotetext[2]{Institut für Theoretische Informatik, ETH Zürich, Switzerland  (\texttt{raphaelmario.steiner@inf.ethz.ch}). Research of R.S.\ supported by an ETH Zurich Postdoctoral Fellowship.}

\maketitle

\begin{abstract}
The clustered chromatic number of a graph class $\GG$ is the minimum integer $c$ such that every graph $G\in\GG$ has a $c$-colouring where each monochromatic component in $G$ has bounded size. We study the clustered chromatic number of graph classes $\GG_H^{\odd}$ defined by excluding a graph $H$ as an odd-minor. How does the structure of $H$ relate to the clustered chromatic number of $\GG_H^{\odd}$? We adapt a proof method of Norin, Scott, Seymour and Wood (2019) to show that the clustered chromatic number of $\GG_H^{\odd}$ is tied to the tree-depth of $H$.
\end{abstract}

\section{Introduction}
We consider simple, finite, undirected graphs $G$ with vertex-set ${V(G)}$ and edge-set ${E(G)}$. See~\citep{diestel2017graphtheory} for graph-theoretic definitions not given here. Let ${\NN \coloneqq \{1,2,\dots\}}$ and ${\NN_0 \coloneqq \{0,1,\dots\}}$. For $n \in \NN$, let $[n]\coloneq \{1,\dots,n\}$.

A \defn{colouring} of a graph $G$ is a function that assigns one colour to each vertex of $G$. For an integer $k\geq 1$, a \defn{$k$-colouring} is a colouring using at most $k$ colours. A colouring of a graph is \defn{proper} if each pair of adjacent vertices receives distinct colours. The \defn{chromatic number $\chi(G)$} of a graph $G$ is the minimum integer $k$ such that $G$ has a proper $k$-colouring. A graph $H$ is a \defn{minor} of a graph $G$ if $H$ is isomorphic to a graph that can be obtained from a subgraph of G by contracting edges. A graph $G$ is \defn{$H$-minor-free} if $H$ is not a minor of $G$. Let \defn{$\GG_H$} denote the class of $H$-minor-free graphs. Hadwiger \cite{hadwiger1943klassifikation} famously conjectured that $\chi(G)\leq h-1$ for every $K_h$-minor-free graph $G$. This is one of the most important open problems in graph theory. The best known upper bound on the chromatic number of $K_h$-minor-free graphs is $O(h \log \log h)$ due to \citet{DP21} (see \cite{kostochka1984average,thomason1984contraction,NPS23} for previous bounds). It is open whether every $K_h$-minor-free graph is properly $O(h)$-colourable. See \cite{SeymourHC} for a survey on this conjecture.

We consider the intersection of both a relaxation and a strengthening of Hadwiger's conjecture. First, the relaxation. For $k\in \NN_0$, a colouring has \defn{clustering $k$} if each monochromatic component has at most $k$ vertices. The \defn{clustered chromatic number $\chi_{\star}(\GG)$} of a graph class~$\GG$ is the minimum $c\in \NN_0$ for which there exists $k\in \NN_0$ such that every graph $G\in \GG$ can be $c$-coloured with clustering $k$. Similarly, the \defn{defective chromatic number $\chi_{\Delta}(\GG)$} of a graph class $\GG$ is the minimum $c\in \NN_0$ for which there exists $d\in \NN_0$ such that every graph $G\in \GG$ can be $c$-coloured with each monochromatic component having maximum degree at most $d$. See \cite{wood2018defective} for a survey on clustered and defective colouring.

Motivated by Hadwiger's conjecture, \citet{EKKOS15} proved a defective variant: $\chi_{\Delta}(\GG_{K_h})=h-1$ for all $h\in \NN$. They also introduced the \defn{Clustered Hadwiger Conjecture}: for all $h\in \NN$, 
$$\chi_{\star}(\GG_{K_h})=h-1.$$ 
\citet{KM07} had previously proved the first $O(h)$ upper bound on~$\chi_{\star}(\GG_{K_h})$. The constant in the $O(h)$ term was successively improved \citep{KM07,vdHW18,EKKOS15,Norin15,LO18,Wood10,LW2,LW3}, before a proof of the Clustered Hadwiger Conjecture was first announced to appear in a future paper by \citet{DN17}. Their full proof has not yet been made public. \citet{DEMW23} recently proved the conjecture.

Now for the strengthening of Hadwiger's conjecture. Let $G$ and $H$ be graphs. An \defn{$H$-model} in $G$ is a collection $\mathcal{H}=(T_x:x\in V(H))$ of pairwise vertex-disjoint subtrees of $G$ such that for each $xy\in E(H)$, there is an edge of $G$ joining $T_x$ and $T_y$. We denote by \defn{$G\langle\mathcal{H}\rangle$} the subgraph of $G$ that is the union $\bigcup(T_x \colon x\in V(H))$ together with the edges in $G$ joining $T_x$ and $T_y$ for each $xy\in E(H)$. 
Clearly $H$ is a minor of $G$ if and only if there exists an $H$-model in $G$. We call $\mathcal{H}$ \defn{odd} if there is a $2$-colouring $c$ of $G\langle\mathcal{H}\rangle$ such that:
\begin{itemize}
    \item for each $x\in V(H)$, $T_x$ is properly $2$-coloured by $c$; and
    \item for each $xy\in E(H)$, there is a monochromatic edge in $G\langle\mathcal{H}\rangle$ joining $T_x$ and $T_y$.
\end{itemize}
We say $c$ \defn{witnesses} that $\mathcal{H}$ is odd. If there is an odd $H$-model in $G$, then we say that $H$ is an \defn{odd-minor} of $G$. A graph $G$ is \defn{$H$-odd-minor-free} if $H$ is not an odd-minor of $G$. Let \defn{$\GG_H^{\odd}$} denote the class of $H$-odd-minor-free graphs.

As a qualitative strengthening of Hadwiger's conjecture, Gerards and Seymour~\cite[pp.~115]{JT95} introduced the \defn{Odd Hadwiger Conjecture}: $\chi(G)\leq h-1$ for every $K_h$-odd-minor-free graph~$G$. Note that in the case when $h=3$, excluding $K_3$ as an odd-minor is equivalent to excluding an odd cycle as a subgraph, so the class of $K_3$-odd-minor-free graphs is exactly the class of all bipartite graphs.
Recently, \citet{Steiner22} showed that the Odd Hadwiger Conjecture is asymptotically equivalent to Hadwiger's conjecture, in the sense that if every $K_t$-minor-free graph is properly $f(t)$-colourable, then every $K_t$-odd-minor-free graph is properly $2\,f(t)$-colourable. 

We are interested in understanding the clustered chromatic number of graph classes defined by a forbidden odd-minor $H$. How does the structure of $H$ relate to the clustered chromatic number of $\GG_H^{\odd}$? \citet{Kawa08} first proved
$\chi_{\star}(\GG_{K_h}^{\odd}) \in O(h)$. The constant factor term in the $O(h)$ was improved in \cite{KO19,LW2}, culminating in a result of \citet{Steiner23} who showed that $\chi_{\star}(\GG_{K_h}^{\odd}) \leq 2h-2$ using chordal partitions. So $\chi_{\star}(\GG_H^{\odd})\leq 2|V(H)|-2$ for every graph $H$. We show that the clustered chromatic number of $\GG_H^{\odd}$ is tied to the connected tree-depth of $H$, which we now define.

The \defn{vertex-height} of a rooted tree $T$ is the maximum number of vertices on a path from a root to a leaf in $T$. For two vertices $u$, $v$ in a rooted tree $T$, we say that $u$ is a \defn{descendant} of~$v$ in $T$ if $v$ lies on the path from the root to $u$ in $T$. The \defn{closure} of $T$ is the graph with vertex set $V(T)$ where $uv\in E(T)$ if $u$ is a descendant of $v$ or vice versa. The \defn{connected tree-depth~$\ctd(G)$} of a graph $G$ is the minimum vertex-height of a rooted tree $T$ with $V(T)=V(G)$ such that $G$ is a subgraph of the closure of $T$\footnote{A forest is \defn{rooted} if each component has a root vertex (which defines the ancestor relation). The \defn{closure} of a rooted forest $F$ is the disjoint union of the closure of each rooted component of $F$. The \defn{tree-depth $\td(G)$} of a graph $G$ is the minimum vertex-height of a rooted forest $F$ with $V(F)=V(G)$ such that $G$ is a subgraph of the closure of $F$. If $G$ is connected, then $\td(G)=\ctd(G)$. In fact, $\td(G)=\ctd(G)$ unless $G$ has two connected components $G_1$ and $G_2$ with $\td(G_1)=\td(G_2)=\td(G)$, in which case $\ctd(G)=\td(G)+1$.}.

\citet{OOW2019defective} showed that $\ctd(H)$ is a lower bound for the defective chromatic number of $\GG_H$. In particular, for every graph $H$, 
$$\ctd(H)-1 \leq \chi_{\Delta}(\GG_H).$$
\citet{OOW2019defective} conjectured that this inequality holds with equality. 
A partial answer to this conjecture was given by 
\citet{NSSW19}, who showed that the defective chromatic number of $\GG_H$, the clustered chromatic number of $\GG_H$, and the connected tree-depth of $H$ are tied. In particular, 
\begin{equation}
    \label{NSSW}
    \ctd(H)-1 \leq 
 \chi_{\Delta}(\GG_H) \leq
 \chi_{\star}(\GG_H)\leq 2^{\ctd(H)+1}-4.
 \end{equation}
\citet{NSSW19} also gave examples of graphs $H$ such that $\chi_{\star}(\GG_H)\geq 2\ctd(H)-2$, and conjectured that $\chi_{\star}(\GG_H)\leq 2\ctd(H)-2$ for every graph $H$. Recently, \citet{Liu24} proved the above conjecture of \citet{OOW2019defective} which, together with a result in \citet{LO18}, implies that for every graph $H$, 
 $$\ctd(H)-1 = \chi_{\Delta}(\GG_H)
\leq \chi_{\star}(\GG_H)\leq 3 \chi_{\Delta}(\GG_H)\leq 3\ctd(H)-3.$$

Now consider the defective and clustered chromatic numbers of $\GG_H^{\odd}$. We prove the following analogue of the above result of \citet{NSSW19} for odd-minors. 

\begin{thm}
\label{MainExponential}
For every graph $H$, 
$$\chi_{\star}(\GG_H^{\odd}) \leq 3\cdot 2^{\ctd(H)}- 4.$$
\end{thm}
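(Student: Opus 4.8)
The plan is to adapt the inductive method of Norin, Scott, Seymour and Wood~\cite{NSSW19} to the setting of odd-minors. Put $g(t):=3\cdot 2^{t}-4$, so that $g(1)=2$ and $g(t)=2\,g(t-1)+4$ for all $t\geq 2$. I would prove, by induction on $t\geq 1$, the following statement: for every graph $H$ with $\ctd(H)\leq t$ there is an integer $k=k(H)$ such that every $H$-odd-minor-free graph admits a $g(t)$-colouring with clustering $k$. \Cref{MainExponential} is then the case $t=\ctd(H)$.

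First I would reduce to a canonical forbidden graph. If $H$ is a subgraph of $J$ then every $J$-odd-minor-free graph is $H$-odd-minor-free (restrict an odd $J$-model to $V(H)$), so $\chi_{\star}(\GG_H^{\odd})\leq\chi_{\star}(\GG_J^{\odd})$. Moreover, if $\ctd(H)\leq t$ then $H$ is a subgraph of the closure $C_{d,t}$ of the complete rooted $d$-ary tree of vertex-height $t$, where $d$ bounds the degrees in an optimal tree-depth decomposition of $H$. Hence it suffices to prove the bound when $H=C_{d,t}$, which has a convenient recursive shape: $C_{d,t}$ has a vertex $r$ adjacent to all others, and $C_{d,t}-r$ is the disjoint union of $d$ copies of $H':=C_{d,t-1}$. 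The base case $t=1$ is trivial, since $C_{d,1}=K_1$ and $\GG_{K_1}^{\odd}$ is trivial.

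For the inductive step, fix $t\geq 2$, let $H:=C_{d,t}$, and let $G$ be $H$-odd-minor-free. The heart of the argument is a structural claim of NSSW type: $V(G)$ partitions as $A\cup B\cup C$ where $G[A]$ is of bounded treewidth (so admits a $4$-colouring with clustering $1$) while $G[B]$ and $G[C]$ are each $H'$-odd-minor-free. Granting this, colour $G[A]$ with $4$ colours and invoke the induction hypothesis for $t-1$ to colour each of $G[B]$ and $G[C]$ with $g(t-1)$ colours and clustering $k(H')$; with the three palettes pairwise disjoint, every monochromatic component lies inside one of $G[A]$, $G[B]$, $G[C]$, so $G$ is $\bigl(4+2\,g(t-1)\bigr)=g(t)$-coloured with clustering $\max\{1,k(H')\}$, completing the induction.

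The main obstacle is proving the structural claim for odd-minors. Following \cite{NSSW19}, the strategy is to iteratively peel off vertex-disjoint odd-$H'$-models, using the absence of an odd-$C_{d,t}$-minor — equivalently, that no connected subgraph of $G$ can be monochromatically attached, under a single consistent $2$-colouring, to $d$ vertex-disjoint odd-$H'$-models — to control what survives: the residue is $H'$-odd-minor-free, and the removed vertices are reorganised into the bounded-treewidth part $A$ together with one further $H'$-odd-minor-free layer. The real difficulty, which does not arise in \cite{NSSW19}, is that odd-minor-freeness is fragile under the vertex deletions and edge contractions that are harmless for ordinary minors, so one must carry a single coherent $2$-colouring through the whole peeling; this is naturally handled via the odd-minor structure theorem of Geelen, Gerards, Reed, Seymour and Vetta (graphs with no odd-$K_h$-minor decompose into pieces that are, up to boundedly many apex vertices, either $K_h$-minor-free or bipartite), and the unavoidable (near-)bipartite pieces are exactly what forces the recursion to bottom out at $2$ colours rather than $0$ — this is what inflates our bound to $3\cdot 2^{\ctd(H)}-4$ instead of the $2^{\ctd(H)+1}-4$ of \cite{NSSW19}.
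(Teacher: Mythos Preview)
Your proposal has a genuine gap at the point you yourself flag as ``the main obstacle'': the structural partition $V(G)=A\cup B\cup C$ is asserted but never established, and the sketch you offer (peel off odd-$H'$-models, invoke the Geelen--Gerards--Reed--Seymour--Vetta structure theorem) does not lead to such a partition in any way I can see. Two concrete problems. First, your assertion that ``$G[A]$ is of bounded treewidth (so admits a $4$-colouring with clustering $1$)'' is false: a graph of treewidth $w$ can have chromatic number $w+1$, so a proper $4$-colouring is impossible once $w\geq 4$; since any treewidth bound here will depend on $H$, you cannot fix $4$ colours for $A$, and the recursion $g(t)=2g(t-1)+4$ collapses. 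Second, the NSSW method you cite does \emph{not} produce a three-way split of this kind; it works by BFS layering and an Erd\H{o}s--P\'osa hitting-set argument inside each layer, and that argument needs a treewidth bound on $G$ to give a small hitting set---something you never secure. (There is also a small slip: if $H\subseteq J$ then every $H$-odd-minor-free graph is $J$-odd-minor-free, not the other way round; your conclusion $\chi_\star(\GG_H^{\odd})\leq\chi_\star(\GG_J^{\odd})$ is nonetheless correct.)

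The paper's actual route is quite different and worth knowing. It first applies a result of Demaine, Hajiaghayi and Kawarabayashi (\cref{TreewidthReductionLemma}): every $H$-odd-minor-free graph has a $2$-colouring in which each monochromatic component has treewidth at most some $w=w(H)$. This reduction to bounded treewidth is what makes the Erd\H{o}s--P\'osa step work. The inductive lemma (\cref{MainTreewidth}) is then proved for bounded-treewidth graphs with no \emph{non-trivial} odd $U_{h,d}$-model, where ``non-trivial'' means every branch set has at least two vertices. This non-triviality is the key technical idea replacing your vague ``single coherent $2$-colouring'': it guarantees each branch tree contains both colours, so one can glue $d$ odd $U_{h-1,d}$-models in a BFS layer to a BFS tree above via monochromatic edges and obtain a non-trivial odd $U_{h,d}$-model. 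The recursion is $f(h)=2(f(h-1)+1)$ inside bounded treewidth, giving $f(h)=3\cdot 2^{h-1}-2$, and the outer $2$-colouring doubles this to $3\cdot 2^{h}-4$. No structure theorem for odd minors is needed.
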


This implies that 
$\chi_{\Delta}(\GG_H^{\odd})$ and $\chi_{\star}(\GG_H^{\odd})$ are tied to the connected tree-depth of $H$. In particular,
\begin{align*}
& \ctd(H)-1 = 
\chi_{\Delta}(\GG_H)
\leq 
\chi_{\Delta}(\GG_H^{\odd})
\leq 
\chi_{\star}(\GG_H^{\odd})
\leq 
3\cdot 2^{\ctd(H)}- 4\quad\text{and}\\
& \ctd(H)-1 = 
\chi_{\Delta}(\GG_H)
\leq 
\chi_{\star}(\GG_H)
\leq 
\chi_{\star}(\GG_H^{\odd})
\leq 
3\cdot 2^{\ctd(H)}- 4.
\end{align*}
We conjecture the following improved upper bounds.
\begin{conj}\label{MainConjecture}
    For every graph $H$, 
    $$\chi_{\star}(\GG_H^{\odd})\leq 2\ctd(H)-2 \quad\text{and}\quad \chi_{\Delta}(\GG_H^{\odd})=\ctd(H)-1.$$
\end{conj}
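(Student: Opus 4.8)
The lower bound is immediate: an odd-minor is in particular a minor, so $\GG_H\subseteq\GG_H^{\odd}$, and hence $\chi_\Delta(\GG_H^{\odd})\ge\chi_\Delta(\GG_H)=\ctd(H)-1$ by \citet{Liu22a} (equivalently, the extremal graphs of \citet{OOW2019defective} witnessing $\chi_\Delta(\GG_H)\ge\ctd(H)-1$ are $H$-minor-free, hence $H$-odd-minor-free), and likewise $\chi_\star(\GG_H^{\odd})\ge\ctd(H)-1$. So \cref{MainConjecture} reduces to the two upper bounds $\chi_\star(\GG_H^{\odd})\le 2\ctd(H)-2$ and $\chi_\Delta(\GG_H^{\odd})\le\ctd(H)-1$. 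The plan is to prove both by a single induction on $t:=\ctd(H)$, following the linear-bound argument of \citet{Liu22a} (the refinement of \citet{NSSW19}) together with the chordal-partition technique by which \citet{Steiner23} obtained $\chi_\star(\GG_{K_h}^{\odd})\le 2h-2=2\ctd(K_h)-2$; the point is to make that technique run for an arbitrary excluded graph $H$ rather than only a complete graph. Fix a rooted tree $T$ with $V(T)=V(H)$, vertex-height $t$, such that $H$ is a subgraph of the closure of $T$; let $r$ be its root and let $H_1,\dots,H_m$ be the components of $H-r$, so $\ctd(H_i)\le t-1$ for each $i$. The recursion will spend exactly $2$ fresh colours (for the clustered bound) or $1$ fresh colour (for the defect bound) in reducing the problem for $H$ to the problems for $H_1,\dots,H_m$, and the arithmetic $(2t-4)+2=2t-2$ and $(t-2)+1=t-1$ is what pins down the constants.

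Given a connected $H$-odd-minor-free graph $G$, take a breadth-first-search layering $(L_0,L_1,\dots)$ from a vertex, with BFS tree $B$, noting two useful features: every edge of $G$ lies inside a layer or joins consecutive layers, so deleting the layers $L_0,L_K,L_{2K},\dots$ leaves components of radius less than $K$; and $B$ is properly $2$-coloured by depth parity, for which every edge of $G$ inside a single layer is monochromatic --- these monochromatic intra-layer edges are the device that upgrades an ordinary minor found in $G$ into an \emph{odd} minor. Now: (i) give the deleted layers the $2$ (resp.\ $1$) fresh colours; (ii) within each component $C$, now of radius less than $K$, pass to a quotient $\hat C$ obtained by contracting bounded-size subtrees of $B$, chosen so that $\hat C$ is $H_i$-odd-minor-free for \emph{some} $i\in[m]$ --- the point being that if $\hat C$ were to contain an odd $H_i$-model for every $i$, sitting in well-separated regions, then a single connected, parity-coherent subgraph of $B$ linking those regions (and living partly in the lower layers) could play the role of $T_r$ and, together with the $H_i$-models, assemble an odd $H$-model in $G$, a contradiction; (iii) apply the inductive hypothesis to colour $\hat C$ with $2t-4$ (resp.\ $t-2$) colours, and pull this colouring back to $C$, the contracted subtrees being bounded so that the clustering (resp.\ defect) grows only by a bounded factor. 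The colour budgets add to $2t-2$ (resp.\ $t-1$). One must also check, in step (i), that the fresh colour class has bounded monochromatic components: such a component lies in a bounded band of consecutive layers, and $H$-odd-minor-freeness --- again through the depth-parity $2$-colouring, exactly as in the chordal-partition analysis of balls by \citet{Steiner23} --- bounds the number of vertices there.

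The main obstacle is the oddness bookkeeping in step (ii). Unlike in the ordinary-minor arguments of \citet{NSSW19} and \citet{Liu22a}, here the subtree $T_r$ and \emph{all} edges joining it to the non-root subtrees $T_x$ must be monochromatic in one single $2$-colouring of $G\langle\mathcal{H}\rangle$; so the deletion pattern of layers, the choice of contracted subtrees, and the recursively produced odd $H_i$-models all have to be arranged so that the $2$-colourings witnessing oddness of the pieces glue consistently onto the depth-parity $2$-colouring of $B$ along monochromatic edges --- and since an edge between consecutive BFS layers is never monochromatic for that colouring, some slack, or a local re-colouring near the attachment points of $T_r$, will be needed. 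Making this work while still spending only $2$ colours per recursion level, rather than incurring the multiplicative blow-up behind the exponential bound $3\cdot 2^{\ctd(H)}-4$ of \cref{MainExponential}, is, I expect, the crux of \cref{MainConjecture}.
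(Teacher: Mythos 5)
The statement you are addressing is stated in the paper as \cref{MainConjecture}, i.e.\ as an open conjecture: the paper offers no proof of it, and its main theorem establishes only the much weaker exponential bound $3\cdot 2^{\ctd(H)}-4$. Your lower-bound paragraph is correct (and is already implicit in the paper: $\GG_H\subseteq\GG_H^{\odd}$ gives $\chi_{\Delta}(\GG_H^{\odd})\geq\chi_{\Delta}(\GG_H)=\ctd(H)-1$ via \citet{Liu22a}), but the two upper bounds are exactly the open content, and what you give for them is a programme rather than a proof. The decisive step is your (ii): you assert that after deleting periodic BFS layers and contracting bounded subtrees of $B$, each component becomes $H_i$-odd-minor-free for some $i$, ``the point being'' that otherwise one could assemble an odd $H$-model. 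That assembly is precisely what is not known how to do: one needs $m$ pairwise disjoint odd models of $H_1,\dots,H_m$ (not merely, for each $i$, some odd $H_i$-model somewhere), all attachable by monochromatic edges to a single properly $2$-coloured connected subgraph playing the role of $T_r$, under one global $2$-colouring of the whole model --- and, as you yourself note, the depth-parity colouring of $B$ makes every inter-layer edge bichromatic, so the witnessing colourings of the recursive pieces do not glue to it. You flag this as ``the crux''; that is an accurate self-assessment, and it means the argument is not complete.

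A further warning sign: if your induction worked as described, then by simply discarding the parity bookkeeping it would also prove $\chi_{\star}(\GG_H)\leq 2\ctd(H)-2$ for ordinary minors, which is itself an open conjecture of \citet{NSSW19} (the best known bound is $3\ctd(H)-3$ by \citet{Liu22a}, whose proof is far more involved than a one-level ``delete every $K$-th layer and contract'' recursion). Likewise your step (i) claims without proof that the union of deleted layers admits a $2$-colouring with bounded monochromatic components (resp.\ a $1$-colouring with bounded monochromatic degree); even this base-level fact needs an argument specific to odd-minor exclusion. So the proposal assembles reasonable ingredients --- BFS layerings, the parity device for upgrading minors to odd minors, and the chordal-partition technique of \citet{Steiner23} --- but does not close any of the steps that make the conjecture hard, and the statement remains open.
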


\citet{LW2} proved that for every graph $H$ and for all integers $s,t\in \NN$, every $H$-odd-minor-free $K_{s,t}$-subgraph-free graph is $(2s+1)$-colourable with clustering at most some function $f(H,s,t)$. With $s=1$ this says that for every graph $H$ and integer $t$, every $H$-odd-minor-free graph with maximum degree less than $t$ is 3-colourable with clustering at most some function $f(H,t)$. This implies that $\chi_{\star}(\GG_H^{\odd}) \leq 3\, \chi_{\Delta}(\GG_H^{\odd})$. 

\section{Proof}

We now work towards proving \cref{MainExponential}. Our proof uses the same strategy that \citet{NSSW19} used to prove \eqref{NSSW} with some modifications in the odd-minor-free setting. 

We need the following definition. A \defn{tree-decomposition} of a graph $G$ is a collection ${\WW = (W_x \colon x \in V(T))}$ of subsets of ${V(G)}$ indexed by the nodes of a tree $T$ such that
(a) for every edge ${vw \in E(G)}$, there exists a node ${x \in V(T)}$ with ${v,w \in W_x}$; and 
(b) for every vertex ${v \in V(G)}$, the set ${\{ x \in V(T) \colon v \in W_x \}}$ induces a non-empty (connected) subtree of~$T$. The \defn{width} of $\WW$ is ${\max\{ \abs{W_x} \colon x \in V(T) \}-1}$. The \defn{treewidth $\tw(G)$} of a graph $G$ is the minimum width of a tree-decomposition of $G$. Treewidth is the standard measure of how similar a graph is to a tree. Indeed, a connected graph has treewidth at most 1 if and only if it is a tree.

The following result due to \citet{DHK-SODA10} allows us to work in the bounded treewidth setting.

\begin{lem}[\cite{DHK-SODA10}]\label{TreewidthReductionLemma}
    For every graph $H$, there exists a constant $w\in \NN$ such that every $H$-odd-minor-free graph $G$ has a $2$-colouring where each monochromatic component has treewidth at most $w$.
\end{lem}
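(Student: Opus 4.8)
The intended proof of this lemma is simply to cite \citet{DHK-SODA10}; for completeness I sketch how it can be derived from the structure theorem for odd-$H$-minor-free graphs of \citet{DHK-SODA10}. In outline, that structure theorem provides a constant $h=h(H)$ such that every $H$-odd-minor-free graph $G$ is obtained by clique-sums of adhesion at most $h$ from \emph{basic} graphs $G_1,\dots,G_m$, where each $G_i$ has a set $A_i\subseteq V(G_i)$ with $\abs{A_i}\leq h$ such that $G_i-A_i$ is either (i) bipartite, or (ii) $h$-almost-embeddable in a surface of Euler genus at most $h$ (the case $\abs{V(G_i)}\leq h$ being trivial). So the first step is to apply this theorem to $G$.

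The second step is to $2$-colour each basic graph $G_i$ so that every monochromatic component has treewidth at most some $w_0=w_0(h)$. For type (i), I would colour the two parts of the bipartition of $G_i-A_i$ with the two colours; both colour classes of $G_i-A_i$ are then edgeless, and distributing the $\leq h$ vertices of $A_i$ among the two colours in any way increases the treewidth of each colour class to at most $h$. For type (ii), I would use the (by now standard) fact that $h$-almost-embeddable graphs of bounded Euler genus have bounded \emph{layered treewidth}, say at most $\ell=\ell(h)$: colouring each vertex of $G_i-A_i$ by the parity of its layer forces every monochromatic component to lie within a single layer, hence to have treewidth at most $\ell-1$, and again absorbing $A_i$ costs at most $h$. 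Thus each $G_i$ admits a $2$-colouring in which every monochromatic component has treewidth at most $w_0:=\max\{\ell-1+h,\,h\}$; moreover this colouring is flexible, in that the vertices of $A_i$ may be assigned colours arbitrarily and the two colours may be globally swapped.

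The main obstacle is the third step: merging the colourings of $G_1,\dots,G_m$ into a single $2$-colouring of $G$ whose monochromatic components still have treewidth bounded in terms of $H$ only. The difficulty is that the colourings of two pieces sharing a clique of the clique-sum decomposition need not agree on that clique, and the rigidity of the type-(i) pieces (whose bipartition classes must remain monochromatic, since a bipartite odd-$H$-minor-free graph can have arbitrarily large treewidth) means this cannot be repaired by local colour swaps alone. One aligns the colourings by exploiting that every shared clique has at most $h$ vertices and that cliques inside the basic pieces have bounded size, together with the freedom to recolour the apex sets, propagating a consistent choice over the decomposition tree; the payoff is that each monochromatic component of the resulting colouring of $G$ is a clique-sum of graphs of treewidth at most $w_0$ along cliques of size at most $h$, and so has treewidth at most $\max\{w_0,h-1\}=:w$. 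Verifying that a consistent alignment always exists and that $w$ depends only on $H$ is the delicate part of the argument, and is exactly what \citet{DHK-SODA10} establish.
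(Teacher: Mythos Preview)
Your proposal is correct: the paper gives no proof of this lemma at all, simply stating it with attribution to \citet{DHK-SODA10} and using it as a black box, exactly as you say in your first sentence. The sketch you add is extra material not present in the paper; it is a reasonable modern outline, though mildly anachronistic in that layered treewidth was formalised only after \citet{DHK-SODA10}, whose own argument for the almost-embeddable pieces and the clique-sum merging is packaged differently.
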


The following Erd\H{o}s-P\'{o}sa type result is folklore (see \citep[Lemma~9]{ISW} which implies this). 

\begin{lem}
\label{HittingSet}
    For every graph $G$, for every collection $\FF$ of connected subgraphs of $G$, and for every $\ell\in\NN$, either\textnormal{:}
    \begin{enumerate}[\textnormal{(}a\textnormal{)}]
        \item there are $\ell$ vertex-disjoint subgraphs in $\FF$, or
        \item there is a set $S\subseteq V(G)$ of size at most $(\ell-1)(\tw(G)+1)$ such that $S \cap V(F) \neq \emptyset$ for all $F \in \FF$.
    \end{enumerate}
\end{lem}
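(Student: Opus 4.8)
I will prove this by induction on $\ell$, peeling off one member of a packing at each step while charging a single bag of an optimal tree-decomposition to the hitting set. Fix a tree-decomposition $\WW=(W_x:x\in V(T))$ of $G$ of width $\tw(G)$ and root $T$ at an arbitrary node, giving $T$ an ancestor relation. For a connected subgraph $F$ of $G$ let $T_F\coloneqq\{x\in V(T):W_x\cap V(F)\neq\emptyset\}$. Since $F$ is connected, $T_F$ induces a nonempty subtree of $T$ (the sets $\{x:v\in W_x\}$ over $v\in V(F)$ are subtrees that pairwise intersect along the edges of $F$, and a union of subtrees of a tree with connected intersection graph is again a subtree); hence $T_F$ has a unique node $\operatorname{top}(F)$ of minimum depth, and every node of $T_F$ is a descendant of $\operatorname{top}(F)$.

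I will use two observations. First, if $V(F)\cap V(F')\neq\emptyset$ then $T_F\cap T_{F'}\neq\emptyset$, since any bag containing a common vertex lies in both $T_F$ and $T_{F'}$. Second, and this is the crux: if $F_1\in\FF$ is chosen so that $\operatorname{top}(F_1)$ has maximum depth over all members of $\FF$, then $\operatorname{top}(F_1)\in T_F$ for every $F\in\FF$ with $T_F\cap T_{F_1}\neq\emptyset$. Indeed, pick $z\in T_F\cap T_{F_1}$; then $z$ is a descendant of both $\operatorname{top}(F)$ and $\operatorname{top}(F_1)$, so both lie on the root-to-$z$ path, and by maximality of the depth of $\operatorname{top}(F_1)$ the node $\operatorname{top}(F)$ is an ancestor of (or equal to) $\operatorname{top}(F_1)$; thus $\operatorname{top}(F_1)$ lies on the path from $\operatorname{top}(F)$ to $z$, which is contained in the subtree $T_F$. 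In particular $W_{\operatorname{top}(F_1)}\cap V(F)\neq\emptyset$.

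For the induction, the case $\ell=1$ is immediate: either $\FF\neq\emptyset$, so (a) holds, or $\FF=\emptyset$, so (b) holds with $S=\emptyset$. Now let $\ell\geq 2$. If $\FF=\emptyset$, take $S=\emptyset$ in (b). Otherwise choose $F_1\in\FF$ with $\operatorname{top}(F_1)$ of maximum depth, set $S_1\coloneqq W_{\operatorname{top}(F_1)}$ (so $\abs{S_1}\leq\tw(G)+1$), and let $\FF_1\coloneqq\{F\in\FF:T_F\cap T_{F_1}=\emptyset\}$. By the first observation, every $F\in\FF_1$ is vertex-disjoint from $F_1$; by the crux observation, every $F\in\FF\setminus\FF_1$ satisfies $W_{\operatorname{top}(F_1)}\cap V(F)\neq\emptyset$, i.e.\ meets $S_1$. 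Apply the induction hypothesis to $\FF_1$ with $\ell-1$: either $\FF_1$ contains $\ell-1$ pairwise vertex-disjoint subgraphs, which together with $F_1$ give $\ell$ pairwise vertex-disjoint subgraphs in $\FF$, yielding (a); or there is $S'\subseteq V(G)$ with $\abs{S'}\leq(\ell-2)(\tw(G)+1)$ meeting every member of $\FF_1$, in which case $S\coloneqq S_1\cup S'$ meets every member of $\FF$ (those in $\FF_1$ via $S'$, the rest via $S_1$) and $\abs{S}\leq(\ell-1)(\tw(G)+1)$, yielding (b).

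The main thing to get right is the crux observation — that taking $F_1$ with the deepest $\operatorname{top}$ forces $\operatorname{top}(F_1)$ into $T_F$ whenever the subtrees $T_F$ and $T_{F_1}$ meet; once this is in hand, the packing and the hitting set are assembled by routine bookkeeping. A secondary point worth checking carefully is that $T_F$ really is a subtree of $T$ for connected $F$, which is exactly where connectivity of the members of $\FF$ enters.
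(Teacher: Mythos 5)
Your proof is correct, and it is essentially the standard folklore argument that the paper itself omits (it only cites \citep{ISW} for a proof): root an optimal tree-decomposition, pick the member of $\FF$ whose bag-subtree has the deepest top node, observe that this top bag hits every member whose bag-subtree meets it, and recurse on the rest. All the key steps check out — that $T_F$ is a subtree for connected $F$, the comparability of the two top nodes via a common descendant, and the induction bookkeeping — so there is nothing to correct.
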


For $h,d\in \NN$, let \defn{$U_{h,d}$} be the closure of the complete $d$-ary tree of vertex-height $h$. Observe that $U_{h,d}$ has connected tree-depth $h$. Moreover, for every graph $H$ of connected tree-depth at most $h$, $H$ is isomorphic to a subgraph of $U_{h,d}$ for $d=|V(H)|$. So to prove \cref{MainExponential}, it suffices to do so for $U_{h,d}$ for all $h,d\in \NN$. 

The next definition is critical in allowing us to lift the proof of \citet{NSSW19} from the minor-free setting to the odd-minor-free setting. Say an $H$-model $(T_x:x\in V(H))$ in $G$ is \defn{non-trivial} if $|V(T_x)|\geq 2$ for each $x\in V(H)$. Non-trivial models have been previously studied, for example, in \citep{FJTW12}. The next lemma is the heart of the paper. 

\begin{lem}\label{MainTreewidth}
    For all $d,h,w \in \NN$, every graph $G$ with treewidth at most $w$ that does not contain a non-trivial odd $U_{h,d}$-model is $(3\cdot 2^{h-1}-2)$-colourable with clustering at most $dw+d-w$.
\end{lem}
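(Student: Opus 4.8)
The plan is to prove \cref{MainTreewidth} by induction on $h$, adapting the layering argument of \citet{NSSW19} to the odd-minor setting by applying \cref{HittingSet} separately to each layer of a breadth-first layering and exploiting non-triviality of models. For the base case $h=1$, note $U_{1,d}$ is a single vertex, so a non-trivial odd $U_{1,d}$-model in $G$ is precisely an edge of $G$ together with a proper $2$-colouring of its two endpoints; hence the hypothesis forces $G$ to be edgeless, and a single colour gives clustering $1\le dw+d-w$. In the inductive step we may assume $G$ is connected, colouring the components separately with a common palette.

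For the inductive step with $h\ge 2$, fix a breadth-first layering $L_0,L_1,L_2,\dots$ of $G$ with breadth-first tree $R$, so every edge lies within a layer or between consecutive layers, and every vertex of $L_j$ with $j\ge 1$ has a parent in $L_{j-1}$. For each $i$, apply \cref{HittingSet} to $G[L_i]$ (which has treewidth at most $w$) with $\ell=d$ and with the family of all $G\langle\mathcal M\rangle$ for non-trivial odd $U_{h-1,d}$-models $\mathcal M$ with $G\langle\mathcal M\rangle\subseteq G[L_i]$ (these subgraphs are connected). This yields, for each $i$, either \textnormal{(a)} $d$ pairwise disjoint non-trivial odd $U_{h-1,d}$-models inside $G[L_i]$, or \textnormal{(b)} a set $S_i\subseteq L_i$ with $\abs{S_i}\le(d-1)(w+1)$ meeting all of them, so that $G[L_i\setminus S_i]$ has no non-trivial odd $U_{h-1,d}$-model.

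The main obstacle is outcome \textnormal{(a)}, where we must glue the $d$ disjoint sub-models into a non-trivial odd $U_{h,d}$-model, contradicting the hypothesis on $G$; this is exactly the place where non-triviality is needed. Modulo a routine treatment of the two shallowest layers (which, lacking enough shallower layers to host a root branch set, are handled directly using that their vertices have a common neighbour or by merging them), we may assume such a bad layer has index $i\ge 2$. Since $U_{h,d}$ is the join of a single root with $d$ disjoint copies of $U_{h-1,d}$, take the root branch set to be $V(R)\cap(L_0\cup\dots\cup L_{i-1})$, a subtree of $R$ with at least two vertices, and use the $d$ sub-models for the $d$ copies; these branch sets are pairwise disjoint, and every branch set of a sub-model lies in $L_i$, hence has a vertex whose parent lies in $L_{i-1}$, so the root branch set is adjacent to all of them. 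To witness oddness, $2$-colour the root branch set by layer-parity (so all of $L_{i-1}$ receives one colour $\gamma$) and on each sub-model use a $2$-colouring witnessing its oddness; as these vertex sets are disjoint this defines a single $2$-colouring $c$, and each branch set is properly $2$-coloured. Monochromatic connecting edges within each copy come from the sub-model witnesses; for an edge from the root to a branch set $B$, since $\abs{V(B)}\ge 2$ and $B$ is properly $2$-coloured, $B$ contains an edge $xy$ with $c(x)\ne c(y)$, so some endpoint $x$ has colour $\gamma$, and then $x$ together with its parent $p\in L_{i-1}$ (where $c(p)=\gamma$) gives a monochromatic edge joining the root branch set and $B$. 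Hence we obtain a non-trivial odd $U_{h,d}$-model, a contradiction; I expect verifying this witness carefully to be the most delicate point, and it is precisely non-triviality of the $\mathcal M_k$ that makes it go through.

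It remains to treat the case where outcome \textnormal{(b)} holds for every $i$. By induction each $G[L_i\setminus S_i]$ has a colouring with $3\cdot 2^{h-2}-2$ colours and clustering at most $dw+d-w$. Colour $G$ using a palette $A$ of $3\cdot 2^{h-2}-2$ colours on $\bigcup\{L_i\setminus S_i: i\text{ odd}\}$, a disjoint palette $B$ of the same size on $\bigcup\{L_i\setminus S_i: i\text{ even}\}$, and two further colours on $\bigcup_i S_i$ split by the parity of $i$; this uses $2(3\cdot 2^{h-2}-2)+2=3\cdot 2^{h-1}-2$ colours. Since the palettes are pairwise disjoint and layers of equal parity are pairwise non-adjacent, every monochromatic component lies in a single $L_i\setminus S_i$ (clustering at most $dw+d-w$ by induction) or in a single $S_i$ (of size at most $(d-1)(w+1)<dw+d-w$), so the clustering is at most $dw+d-w$, completing the induction.
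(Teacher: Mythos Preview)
Your argument is essentially the paper's proof, and the induction, the use of \cref{HittingSet} per layer, the construction of the root branch set from the BFS tree on shallower layers, and the parity-based $2$-colouring to witness oddness are all correct and match the paper. The one genuine gap is your treatment of layer $L_1$. You write that this is ``routine'' because the vertices ``have a common neighbour'' or can be handled ``by merging''; neither of these actually yields a non-trivial root branch set. If outcome~(a) occurs at $i=1$, the only candidate for the root branch set in shallower layers is $\{r\}$, a single vertex, so the resulting $U_{h,d}$-model is trivial and gives no contradiction. Stealing a spare vertex from $L_1$ need not be possible (the $d$ sub-models may cover $L_1$), and merging $L_0$ with $L_1$ just moves the problem.

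The fix is exactly what the paper does and costs nothing: before applying \cref{HittingSet} to layer $V_i$ (for every $i\ge 1$), first delete one arbitrary vertex $u_i\in V_i$ and work in $G[V_i\setminus\{u_i\}]$. If outcome~(a) occurs, take the root branch set to be a BFS-tree spanning $V_0\cup\dots\cup V_{i-1}\cup\{u_i\}$, which now has at least two vertices for every $i\ge 1$; your oddness witness then goes through verbatim. If outcome~(b) occurs, put $u_i$ into the hitting set, so the ``extra'' colour class is $S_i\cup\{u_i\}$ of size at most $(d-1)(w+1)+1=dw+d-w$. Your colour count is unchanged ($2(3\cdot 2^{h-2}-2)+2=3\cdot 2^{h-1}-2$) and the clustering bound becomes exactly $dw+d-w$, matching the statement.
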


\begin{proof}
    We proceed by induction on $h$. In the base case $h=1$, we have $U_{1,d}\cong K_1$, implying $G$ contains no tree on at least two vertices, and thus $E(G)=\emptyset$. Hence $G$ can be coloured with $1=3\cdot 2^{1-1}-2$ colour with clustering $1$. 

    Now assume $h\geq 2$ and the claim holds for $h-1$. Without loss of generality, we may assume that $G$ is connected. Fix $r\in V(G)$. For each $i\in \NN_0$, let $V_i\coloneq \{v\in V(G):\dist_G(v,r)=i\}$. We refer to each $V_i$ as a \defn{layer}. Fix $i\geq 1$ where $V_i\neq \emptyset$, and let $u_i\in V_i$. Let 
    $\mathcal{M}$ be the collection of all 
    non-trivial odd $U_{h-1,d}$-models in $G[V_i\setminus\{u_i\}]$, and let $\mathcal{F}\coloneq\{ G\langle\mathcal{H}\rangle:\mathcal{H}\in \mathcal{M}\}$. Each element of $\mathcal{F}$ is a connected subgraph of $G[V_i\setminus\{u_i\}]$ since $U_{h-1,d}$ is connected. 
    
    Suppose that $\mathcal{F}$ contains $d$ pairwise vertex-disjoint subgraphs $G\langle\mathcal{H}_1\rangle,\dots,G\langle\mathcal{H}_d\rangle$. Then each $\mathcal{H}_j$ is a non-trivial odd $U_{h-1,d}$-model in $G[V_i\setminus\{u_i\}]$. Say $\mathcal{H}_j=(T_x^{(j)}\colon x\in V(U_{h-1,d}))$. Let $c_j$ be a red--blue colouring of $G\langle\mathcal{H}_j\rangle$ that witnesses $\mathcal{H}_j$ being odd. Since $\mathcal{H}_j$ is non-trivial, $T_x^{(j)}$ contains both a red vertex and a blue vertex for each $x\in V(U_{h-1,d})$. Let $T$ be a spanning tree of $G[V_0\cup \dots \cup V_{i-1}\cup \{u_i\}]$ rooted at $r$, where for each non-root vertex $u\in V(T) \cap V_j$ there is exactly one edge $uw$ in $T$ with $w\in V(T)\cap V_{j-1}$. Then $|V(T)|\geq 2$. Moreover, there is a proper $2$-colouring $c_T$ of $T$ where each vertex in $V_{i-1}$ is coloured red. Since every vertex in $V_i$ has a neighbour in $V_{i-1}$, for each $x\in V(H)$ and $j\in [d]$, each red vertex in $T_x^{(j)}$ is adjacent to a red vertex in $T$. Observe that $U_{h,d}$ can be constructed by taking $d$ disjoint copies of $U_{h-1,d}$ plus a dominant vertex. By taking $\mathcal{H}_1,\dots,\mathcal{H}_d$ together with~$T$ and all the edges between $G\langle\mathcal{H}_1\rangle\cup\dots\cup G\langle\mathcal{H}_d\rangle$ and $V_{i-1}$, it follows that $G$ contains a non-trivial $U_{h,d}$-model $\mathcal{H}=(T_x\colon x\in V(U_{h,d}))$. Furthermore, let $c(v):=c_T(v)$ for all~$v\in V(T)$ and $c(w)\coloneq c_j(w)$ for all $w\in V(G\langle\mathcal{H}_j\rangle)$ and $j\in [d]$. Then $c$ witnesses that $\mathcal{H}$ is odd, a contradiction. 
    
    Thus $\mathcal{F}$ does not contain $d$ pairwise vertex-disjoint subgraphs. By \cref{HittingSet} applied to~$\mathcal{F}$, there is a set $S_i\subseteq V_i$ such that $|S_i|\leq (d-1)(w+1)$ and $G[V_i\setminus (S_i \cup \{u_i\})]$ contains no subgraph in $\mathcal{F}$. Hence $G[V_i\setminus (S\cup\{u_i\})]$ contains no non-trivial odd $U_{h-1,d}$-model. By induction, $G[V_i\setminus (S_i \cup \{u_i\})]$ can be $(3\cdot 2^{h-2}-2)$-coloured with clustering $dw+d-w$. Use a new colour for the vertices in $S_i \cup \{u_i\}$. So $G[V_i]$ can be $(3\cdot 2^{h-2}-1)$-coloured  with clustering at most $dw+d-w$. Moreover, $G[V_0]$ can be $1$-coloured with clustering $1$ since $|V_0|=1$. By alternating the colour sets used for each layer, it follows that $G$ can be coloured with $2(3\cdot 2^{h-2}-1)=3\cdot 2^{h-1}-2$ colours and with clustering at most $dw+d-w$, as required.
\end{proof}

We are now ready to prove our main result.

\begin{proof}[Proof of \cref{MainExponential}] 
    Let $G$ be an $H$-odd-minor-free graph. Then $G$ is $U_{\ctd(H),d}$-odd-minor-free for $d=|V(H)|$. By \cref{TreewidthReductionLemma}, there exists an integer $w=w(\ctd(H),d)$ such that $G$ has a red--blue colouring where each monochromatic component of $G$ has treewidth at most~$w$. Since $G$ is $U_{\ctd(H),d}$-odd-minor-free, $G$ contains no non-trivial odd $U_{\ctd(H),d}$-model. By \cref{MainTreewidth}, each monochromatic component of $G$ can be $(3\cdot 2^{\ctd(H)-1}-2)$-coloured with clustering $dw+d-w$. Using distinct colour sets for the red and blue components of $G$, we conclude that $G$ can be $(3\cdot 2^{\ctd(H)}-4)$-coloured with clustering $dw+d-w$. Hence, $\chi_{\star}(\GG_H^{\odd})\leq 3\cdot 2^{\ctd(H)}-4$.
\end{proof}

\subsection*{Acknowledgements} 
\cref{MainExponential} was first published in the Ph.D.\ thesis of the second author~\citep{KangPhD}. This result was independently discovered at the IBS-MATRIX ``\href{https://www.matrix-inst.org.au/events/structural-graph-theory-downunder-iii/}{Structural Graph Theory Downunder III}'' 
workshop at the Mathematical Research Institute MATRIX (April 2023). 

{\fontsize{10pt}{11pt}\selectfont
\bibliographystyle{DavidNatbibStyle}
\bibliography{RobReferences}}
\end{document}